% 
% Semidefinite programming bounds for the average kissing number
%
% Created            : <Thu Oct 10 13:18:51 2019 02:00>
% First arXiv version: <Thu Mar 26 10:18:06 2020 01:00>
%

\documentclass[a4paper, reqno]{amsart}

\usepackage[utf8]{inputenc}
\usepackage{mathtools}
\usepackage{graphicx}
\usepackage{array}
\usepackage[colorlinks]{hyperref}

% Theorems, lemmas, etc.
\newtheorem{lemma}{Lemma}[section]
\newtheorem{theorem}[lemma]{Theorem}

% Number sets.

\newcommand{\R}{\mathbb{R}}

% Cal. letters.

\newcommand{\Pcal}{\mathcal{P}}

\newcommand{\Scal}{\mathcal{S}}

% Parameters etc.
\newcommand{\avgkiss}{\kappa}

% Operators.
\DeclareMathOperator{\trace}{tr}

\DeclareMathOperator{\dens}{dens}
\DeclareMathOperator{\ip}{ip}

% Optimization problems.
\newcommand{\optprob}[1]{{\arraycolsep=0pt%
  \begin{array}{r@{\ }l@{\quad}l}
    #1
  \end{array}}}

% Definitions.
\newcommand{\defi}[1]{\textit{#1}}

% Misc.
\newcommand{\tp}{{\sf T}}

%%%%%%%%%%%%%%%%%%%%%%%%%%%%%%%%%%%%%%%%%%%%%%%%%%%%%%%%%%%%%%%%%%%%%%

\title{Semidefinite programming bounds for the average kissing number}

\author{Maria Dostert}
\address{M. Dostert, \'Ecole polytechnique f\'ed\'erale de Lausanne,
  SB TN, Station 8, 1015 Lausanne, Suisse / Switzerland.}
\email{maria.dostert@epfl.ch}

\author{Alexander Kolpakov}
\address{A. Kolpakov, Institut de math\'ematiques, Rue Emile-Argand 11,
  2000 Neuch\^atel, Suisse / Switzerland.}
\email{kolpakov.alexander@gmail.com}

\author{Fernando Mário de Oliveira Filho}
\address{F.M. de Oliveira Filho, Delft Institute of Applied
  Mathematics, Delft University of Technology, Van Mourik
  Broekmanweg~6, 2628 XE Delft, The Netherlands.}
\email{fmario@gmail.com}

\thanks{The second author was supported by the Swiss National Science
  Foundation project number~PP00P2\_170560.}

\subjclass[2010]{52C17, 90C22, 90C34}

\date{26 March 2020}

%%%%%%%%%%%%%%%%%%%%%%%%%%%%%%%%%%%%%%%%%%%%%%%%%%%%%%%%%%%%%%%%%%%%%%

\begin{document}

\begin{abstract}
  The average kissing number of~$\R^n$ is the supremum of the average
  degrees of contact graphs of packings of finitely many balls (of any
  radii) in~$\R^n$. We provide an upper bound for the average kissing
  number based on semidefinite programming that improves previous
  bounds in dimensions~$3$,~\dots,~$9$. A~very simple upper bound for
  the average kissing number is twice the kissing number; in
  dimensions~$6$, \dots,~$9$ our new bound is the first to improve on
  this simple upper bound.
\end{abstract}

\maketitle
\markboth{M. Dostert, A. Kolpakov, and F.M. de Oliveira
  Filho}{Semidefinite programming bounds for the average kissing
  number}

%%%%%%%%%%%%%%%%%%%%%%%%%%%%%%%%%%%%%%%%%%%%%%%%%%%%%%%%%%%%%%%%%%%%%%

\section{Introduction}

A \textit{packing of balls} in~$\R^n$ is a finite set of
interior-disjoint closed balls. The \textit{contact graph} of a
packing~$\Pcal$ is the graph with vertex set~$\Pcal$ in which two
balls~$X$ and~$Y$ are adjacent if they intersect, that is, if they are
tangent to each other.

Contact graphs of packings of disks on the plane are characterized by
the Koebe-Andreev-Thurston theorem~\cite{Stephenson2005}: they are
precisely the (simple) planar graphs. In higher dimensions, no such
simple characterization is known (see the paper by
Glazyrin~\cite{Glazyrin2017} for a nice discussion), and therefore
research has been focused on understanding the behavior of some
specific parameters of contact graphs.

In this paper, we consider the average degree of contact graphs. More
precisely, we are interested in the \textit{average kissing number}
of~$\R^n$, namely
\[
  \avgkiss_n = \sup\{\, \overline{\delta}(G) : \text{$G$ is the
    contact graph of a packing of balls in~$\R^n$}\,\},
\]
where~$\overline{\delta}(G)$ denotes the average degree of~$G$.

Lower bounds for~$\avgkiss_n$ can be obtained by constructions; a
simple idea is to consider lattice packings. Given a
lattice~$\Lambda \subseteq \R^n$ with shortest vectors of length~$d$,
we consider the set of all balls of radius~$d/2$ centered on the
lattice points. These balls have disjoint interiors and so we have a
packing of infinitely many balls. Each ball in this packing has the
same number of tangent balls, called the \textit{kissing number} of
the lattice~$\Lambda$. The \textit{lattice kissing number} of~$\R^n$,
denoted by~$\tau^*_n$, is the largest kissing number of any lattice
in~$\R^n$; immediately we have~$\avgkiss_n \geq \tau^*_n$.  Conway and
Sloane~\cite[Table~1.2]{ConwayS1988} list lower bounds for~$\tau^*_n$,
and hence for~$\avgkiss_n$, for~$n$ up to~128. For~$n = 3$, a
construction of Eppstein, Kuperberg, and Ziegler~\cite{EppsteinKZ2003}
gives~$\avgkiss_3 \geq 12.612$, while~$\tau_3^* = 12$.

On the side of upper bounds, it is easy to see
that~$\avgkiss_n \leq 2 \tau_n$, where~$\tau_n$ is the \textit{kissing
  number} of~$\R^n$, that is, the maximum number of interior-disjoint
unit balls that can simultaneously touch a central unit ball. Indeed,
say~$\Pcal$ is a packing of balls and let~$r(X)$ be the radius of the
ball~$X \in \Pcal$; let~$G = (\Pcal, E)$ be the contact graph
of~$\Pcal$. In~$G$, the number of neighbors of a ball~$X \in \Pcal$
that have radius at least~$r(X)$ is at most the kissing
number~$\tau_n$. So
\[
  |E| \leq \sum_{X \in \Pcal} |\{\,\{X, Y\} \in E : r(X) \leq
  r(Y)\,\}| \leq \tau_n |\Pcal|,
\]
whence the average degree of~$G$ is~$2|E|/|\Pcal| \leq 2 \tau_n$. Though
simple, this bound is still the best known for all~$n \geq 10$.

Kuperberg and Schramm~\cite{KuperbergS1994} gave the first nontrivial
upper bound for the average kissing number in dimension~$3$, proving
that~$\avgkiss_3 \leq 8 + 4 \sqrt{3} =
14.928\ldots$. Glazyrin~\cite{Glazyrin2017} refined their approach and
showed that~$\avgkiss_3 \leq 13.955$; he also extended their result to
higher dimensions and managed to beat the upper bound of~$2\tau_n$
for~$n = 4$ and~$5$. In this paper, we use semidefinite programming to
refine Glazyrin's approach (see~\S\S\ref{sec:glazyrin}
and~\ref{sec:first-sdp} below), obtaining better upper bounds
for~$n = 3$,~\dots,~$9$; see
Table~\ref{tab:bounds}. In~\S\ref{sec:cohn-elkies} we discuss an
alternative approach related to the linear programming bound of Cohn
and Elkies~\cite{CohnE2003} for the sphere packing density.

\begin{table}[tbp]
  \begin{center}
    \begin{tabular}{cccc}
      \textsl{Dimension}&\textsl{Lower bound}
      &\textsl{Previous upper bound}
      &\textsl{New upper bound}\\[3pt]
      \hline\noalign{\vskip3pt}
      3  & 12.612 &   13.955& 13.606\\
      4  & 24     &   34.681& 27.439\\
      5  & 40     &   77.757& 64.022 \\
      6  & 72     &  156 &   121.105\\
      7  & 126    &  268 &   223.144\\
      8  & 240    &  480 &   408.386\\
      9  & 272    &  726 &   722.629\\[3pt]
      \hline
    \end{tabular}
  \end{center}
  \bigskip

  \caption{Lower and upper bounds for the average kissing number. The
    lower bound in dimension~3 was given by Eppstein, Kuperberg, and
    Ziegler~\cite{EppsteinKZ2003}; all other lower are in listed by
    Conway and Sloane~\cite[Table~1.2]{ConwayS1988}. Upper bounds in
    dimensions~$3$, \dots,~$5$ are due to
    Glazyrin~\cite{Glazyrin2017}; all other upper bounds are twice the
    best known upper bound for the kissing number; see Table~1 in
    Machado and Oliveira~\cite{MachadoO2018}.}
  \label{tab:bounds}
\end{table}

%=====================================================================

\subsection{Notation and preliminaries}

The Euclidean inner product on~$\R^n$ is denoted by
$x \cdot y = x_1 y_1 + \cdots + x_n y_n$ for~$x$, $y \in \R^n$. The
$(n-1)$-dimensional unit sphere is
$S^{n-1} = \{\, x \in \R^n : \|x\| = 1\,\}$; the distance between
points~$x$, $y \in S^{n-1}$ is~$\arccos x \cdot y$. The surface
measure on the $(n-1)$-dimensional sphere of radius~$\rho$ is denoted
by~$\omega_\rho$; we write~$\omega = \omega_1$.

A spherical cap in~$S^{n-1}$ of center~$x \in S^{n-1}$ and
radius~$\alpha$ is the set of all points in~$S^{n-1}$ at distance at
most~$\alpha$ from~$x$, namely
\[
  \{\, y \in S^{n-1} : x \cdot y \geq \cos\alpha\,\}.
\]
The normalized area of this cap is
\[
  \frac{\omega(S^{n-2})}{\omega(S^{n-1})} \int_{\cos\alpha}^1 (1 -
  u^2)^{(n-3)/2}\, du.
\]
Spherical caps are defined similarly for spheres of radius other
than~1. Of course, the normalized area of a cap of radius~$\alpha$ is
the same irrespective of the radius of the sphere, and is given by the
formula above. The area of a spherical cap can be computed by a
recurrence; see Appendix~\ref{ap:area-caps}.

Let~$V$ be a measure space. A kernel is a real-valued 
square-integrable function on~$V \times V$. If~$f\colon V \to \R$ is
square integrable, then~$f\otimes f^*$ is the kernel that
maps~$(x, y)$ to~$f(x) f(y)$.

%%%%%%%%%%%%%%%%%%%%%%%%%%%%%%%%%%%%%%%%%%%%%%%%%%%%%%%%%%%%%%%%%%%%%%

\section{Glazyrin's upper bound}
\label{sec:glazyrin}

Glazyrin~\cite{Glazyrin2017} refines and extends previous work by
Kuperberg and Schramm~\cite{KuperbergS1994} and obtains as a result
the best upper bounds on the average kissing number in dimensions~$3$,
\dots,~$5$. Here is a short description of Glazyrin's approach,
following his presentation. In~\S\ref{sec:first-sdp} we will see how
Glazyrin's bounds can be improved with the help of semidefinite
programming.

Fix~$\rho > 1$ and the dimension~$n \geq 3$. For~$r > 0$, let~$B_r$ be
a ball of radius~$r$ tangent to the ball of radius~1 centered at the
origin. The intersection of~$B_r$ with the sphere~$\rho S^{n-1}$ of
radius~$\rho$ centered at the origin, if nonempty, is a spherical cap
on~$\rho S^{n-1}$. The normalized area of this spherical cap is
denoted by~$A_{n,\rho}(r)$, that is,
\[
  A_{n,\rho}(r) = \frac{\omega_\rho(B_r \cap \rho
    S^{n-1})}{\omega_\rho(\rho S^{n-1})},
\]
which as a function of~$r$ is monotonically increasing.

\begin{lemma}
  \label{lem:minimum}
  If~$n \geq 3$, $\rho > 1$, and~$r > 0$,
  then~$A_{n,\rho}(r) + A_{n,\rho}(1/r) \geq 2 A_{n,\rho}(1)$.
\end{lemma}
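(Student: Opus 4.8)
The plan is to reduce the inequality to a one-variable statement about a convex function. First I would compute $A_{n,\rho}(r)$ explicitly. If $z$ is the center of $B_r$, then $\|z\| = 1+r$ since $B_r$ is (externally) tangent to the unit ball at the origin, and a point $p \in \rho S^{n-1}$ lies in $B_r$ exactly when $\|p-z\|^2 \le r^2$; expanding this and using $p\cdot z = \rho(1+r)\cos\theta$, where $\theta$ is the angle between $p$ and $z$, it becomes
\[
  \cos\theta \ge t(r) := \frac{\rho^2 + 1 + 2r}{2\rho(1+r)}.
\]
Hence $B_r \cap \rho S^{n-1}$ is a spherical cap of angular radius $\arccos t(r)$ when $t(r) \le 1$, and is empty otherwise, so by the area formula recalled in the preliminaries
\[
  A_{n,\rho}(r) = G(t(r)), \qquad G(s) := \frac{\omega(S^{n-2})}{\omega(S^{n-1})} \int_{\min\{s,\,1\}}^1 (1-u^2)^{(n-3)/2}\,du,
\]
where $G$ is defined on $[0,\infty)$ and vanishes on $[1,\infty)$; note $t(r) > 0$ for every $r > 0$, so only values in $[0,\infty)$ occur.

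The crucial observation is an exact identity. Rewriting $t(r) = \tfrac1\rho + \tfrac{\rho^2-1}{2\rho}\cdot\tfrac1{1+r}$ and using $\tfrac1{1+r} + \tfrac1{1+1/r} = \tfrac1{1+r} + \tfrac r{1+r} = 1$, one finds
\[
  t(r) + t(1/r) = \frac2\rho + \frac{\rho^2-1}{2\rho} = 2\,t(1),
\]
so $t(1)$ is precisely the midpoint of $t(r)$ and $t(1/r)$. Granting that $G$ is convex on $[0,\infty)$, the lemma then follows at once from Jensen's inequality and monotonicity:
\[
  A_{n,\rho}(r) + A_{n,\rho}(1/r) = G(t(r)) + G(t(1/r)) \ge 2\,G\!\left(\frac{t(r)+t(1/r)}{2}\right) = 2\,G(t(1)) = 2\,A_{n,\rho}(1).
\]

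It remains to verify that $G$ is convex on $[0,\infty)$, which I expect to be the only genuinely technical point (the kink of $G$ at $s=1$ needs a little care). On $[0,1]$ one computes $G'(s) = -\tfrac{\omega(S^{n-2})}{\omega(S^{n-1})}(1-s^2)^{(n-3)/2}$ and $G''(s) = \tfrac{\omega(S^{n-2})}{\omega(S^{n-1})}(n-3)\,s\,(1-s^2)^{(n-5)/2} \ge 0$ since $n \ge 3$; on $[1,\infty)$ the function $G$ is identically $0$; and at $s=1$ the left derivative is $\le 0$ (equal to $0$ for $n>3$ and to $-\omega(S^{n-2})/\omega(S^{n-1})$ for $n=3$, since in that case $G(s) = \tfrac{\omega(S^1)}{\omega(S^2)}(1-s)$ on $[0,1]$) while the right derivative is $0$. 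Thus $G'$ is non-decreasing on all of $[0,\infty)$, so $G$ is convex there, which completes the proof.
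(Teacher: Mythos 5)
Your proof is correct, and it is a genuine repackaging of the paper's argument rather than a copy of it. The key identity is the same in both: writing $\cos$ of the angular radius of $B_s\cap\rho S^{n-1}$ as $t(s)=\frac{\rho^2+1+2s}{2\rho(1+s)}$, one has $t(r)+t(1/r)=\frac{\rho^2+3}{2\rho}=2t(1)$, which is exactly the paper's relation $\cos\alpha+\cos\beta=\frac{\rho^2+3}{2\rho}$. Where you diverge is in how the inequality is extracted from this identity. The paper first reduces to $1<\rho<3$, assumes both caps are nonempty, fixes $C=\cos\alpha+\cos\beta$, and minimizes the one-variable function $f(x)=\int_x^1(1-u^2)^{(n-3)/2}\,du+\int_{C-x}^1(1-u^2)^{(n-3)/2}\,du$ by a sign analysis of $f'$, finding the minimum at $x=C/2$; it then treats the case of an empty intersection by a separate monotonicity argument reducing to the boundary radius $r'=(\rho-1)/2$. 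You instead extend the cap-area function to $G$ on $[0,\infty)$ by setting it to zero past $1$, verify convexity (including across the kink at $s=1$, which you handle correctly, noting the $n=3$ case where the left derivative is strictly negative), and conclude by midpoint convexity. The derivative computation behind your convexity claim is essentially the paper's computation of $f'$, so the analytic content is the same; what your formulation buys is uniformity: the cases $\rho\ge 3$ and $B_r\cap\rho S^{n-1}=\emptyset$ are absorbed automatically instead of needing separate treatment, and the structural reason for the inequality (constant midpoint plus convexity) is made explicit. One tiny remark: the appeal to ``monotonicity'' alongside Jensen in your concluding display is superfluous, since midpoint convexity alone gives $G(t(r))+G(t(1/r))\ge 2G\bigl(\tfrac{t(r)+t(1/r)}{2}\bigr)=2G(t(1))$.
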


\begin{proof}
If~$\rho \geq 3$, then~$A_{n,\rho}(1) = 0$ and the result follows, so we
assume~$\rho < 3$. For~$s > 0$, let~$B_s$ be a ball of radius~$s$
tangent to the ball of radius~1 centered at the origin. Let us assume
first that both intersections~$B_r \cap \rho S^{n-1}$
and~$B_{1/r} \cap \rho S^{n-1}$ are nonempty and hence that both are
spherical caps in~$\rho S^{n-1}$; let~$\alpha$ and~$\beta$ denote
their radii.

\begin{figure}[tb]
  \begin{center}
    \includegraphics{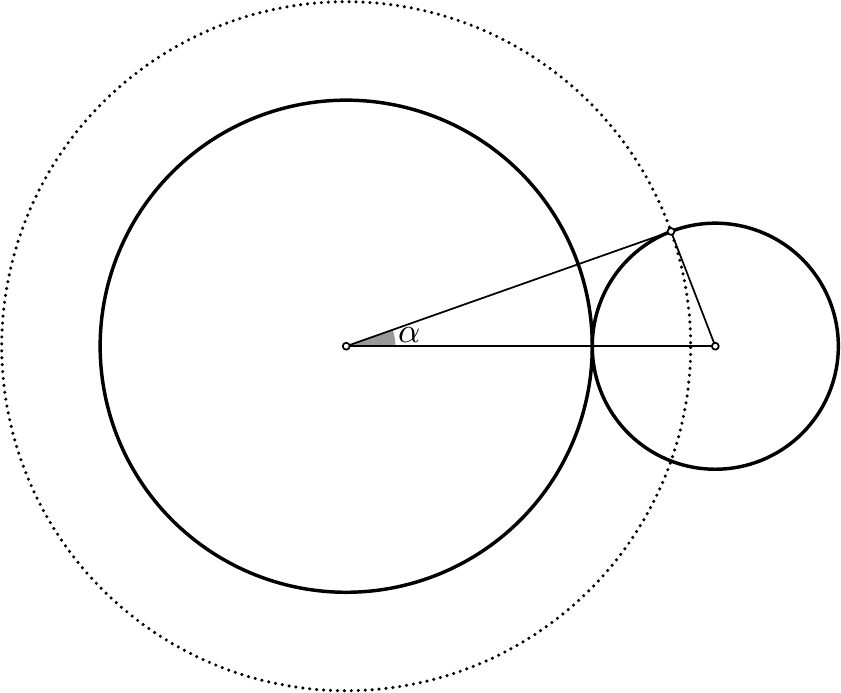}
  \end{center}

  \caption{If the central circle, drawn with full line, has radius~1,
    if the circle drawn with dotted line has radius~$\rho > 1$, and if
    the smaller circle that touches the central circle has radius~$s$,
    then the law of cosines gives
    $s^2 = \rho^2 + (1 + s)^2 - 2\rho(1+s)\cos\alpha$.}
  \label{fig:lem-minimum}
\end{figure}

Using the law of cosines (see Figure~\ref{fig:lem-minimum}) we can
determine both~$\cos\alpha$ and~$\cos\beta$ and as a consequence get
\[
  \cos\alpha + \cos\beta = \frac{\rho^2 + 3}{2\rho}.
\]
Let~$C$ denote the right-hand side above, so~$C \in (1, 2)$
since~$1 < \rho < 3$. Write $x = \cos \alpha$;
then~$\cos\beta = C - x$. Use the formula for the area of a spherical
cap to get
\[
  A_{n,\rho}(r) + A_{n,\rho}(1/r) = K \int_x^1 (1 -
  u^2)^{(n-3)/2}\, du + K \int_{C-x}^1 (1 - u^2)^{(n-3)/2}\, du,
\]
where~$K$ is a positive constant depending only on the
dimension~$n$.

From the above expression, if~$f(x)$ is such
that~$A_{n,\rho}(r) + A_{n,\rho}(1/r) = K f(x)$, then
\[
  f'(x) = -(1 - x^2)^{(n-3)/2} + (1 - (C - x)^2)^{(n-3)/2}.
\]
So~$f$ is monotonically decreasing when~$1 - x^2 \geq 1 - (C-x)^2$,
that is, when $x \in [C-1, C/2]$, and monotonically increasing
when~$x \in [C/2, 1]$. A global minimum of~$f$ is therefore attained
at~$x = C/2$, which implies that~$\alpha = \beta$ and so~$r = 1/r$ and
therefore~$r = 1$, proving the theorem when
both~$B_r \cap \rho S^{n-1}$ and~$B_{1/r} \cap \rho S^{n-1}$ are
nonempty.

Now say~$B_r \cap \rho S^{n-1}$ is empty;
then~$B_{1/r} \cap \rho S^{n-1}$ is not empty. Note~$r < r'$,
where~$r' = (\rho - 1) / 2$; moreover~$B_{r'} \cap \rho S^{n-1}$ is a
single point and so~$A_{n,\rho}(r') = 0$.  Since
also~$B_{1/r'} \cap \rho S^{n-1} \neq \emptyset$, we know
that~$A_{n,\rho}(r') + A_{n,\rho}(1 / r') \geq 2 A_{n,\rho}(1)$ and
hence, since~$A_{n,\rho}$ is monotonically increasing, we get
\[
  A_{n,\rho}(r) + A_{n,\rho}(1/r) \geq A_{n,\rho}(r') + A_{n,\rho}(1/r') \geq
  2 A_{n,\rho}(1),
\]
as we wanted.
\end{proof}

Fix~$\rho > 1$ and consider a unit ball centered at the origin. Any
configuration of pairwise interior-disjoint balls (of any radii)
tangent to the central unit ball covers a certain fraction of the
sphere~$\rho S^{n-1}$ of radius~$\rho$ centered at the origin. The
supremum of this covered fraction taken over all possible
configurations is denoted by~$\dens_n(\rho)$.

\begin{theorem}
  \label{thm:upper-bound}
  If~$n \geq 3$ and~$1 < \rho < 3$, then
  \[
    \avgkiss_n \leq \frac{\dens_n(\rho)}{A_{n,\rho}(1)}.
  \]
\end{theorem}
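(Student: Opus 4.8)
I want to bound the average degree of the contact graph $G = (\Pcal, E)$ of an arbitrary finite packing $\Pcal$ of balls in $\R^n$ by $\dens_n(\rho)/A_{n,\rho}(1)$. The natural device is a double counting / amortization argument: to each ball $X \in \Pcal$ I associate the sphere $\rho S^{n-1}$ scaled and translated to be concentric with $X$ (so if $X$ has center $c$ and radius $r(X)$, I use the sphere of radius $\rho\, r(X)$ about $c$). For each neighbor $Y$ of $X$ in $G$, the ball $Y$ — rescaled by the factor $1/r(X)$ so that $X$ becomes a unit ball — meets this sphere in a spherical cap whose normalized area is $A_{n,\rho}(r(Y)/r(X))$. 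Since the neighbors of $X$ are pairwise interior-disjoint and all tangent to $X$, the definition of $\dens_n(\rho)$ gives
\[
  \sum_{Y : \{X,Y\} \in E} A_{n,\rho}(r(Y)/r(X)) \leq \dens_n(\rho)
\]
for every $X \in \Pcal$.

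**Combining over all balls.** Summing this inequality over $X \in \Pcal$ produces, on the left-hand side, a sum over ordered pairs $(X,Y)$ with $\{X,Y\} \in E$ of the quantity $A_{n,\rho}(r(Y)/r(X))$. Grouping the two ordered pairs coming from each edge $\{X,Y\}$, writing $r = r(Y)/r(X)$, that pair contributes $A_{n,\rho}(r) + A_{n,\rho}(1/r)$, which by Lemma~\ref{lem:minimum} is at least $2 A_{n,\rho}(1)$. Hence
\[
  2 A_{n,\rho}(1)\, |E| \leq \sum_{X \in \Pcal} \sum_{Y : \{X,Y\}\in E} A_{n,\rho}(r(Y)/r(X)) \leq \dens_n(\rho)\, |\Pcal|,
\]
so the average degree $2|E|/|\Pcal|$ is at most $\dens_n(\rho)/A_{n,\rho}(1)$. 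Taking the supremum over all packings $\Pcal$ gives the claimed bound on $\avgkiss_n$, provided $A_{n,\rho}(1) > 0$, which holds precisely because $1 < \rho < 3$.

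**The main obstacle.** The arithmetic of combining the inequalities is routine; the one point that needs genuine care is the very first step — that the neighbors of a fixed ball $X$, after the rescaling that turns $X$ into a unit ball, really do form a configuration of the type quantified in the definition of $\dens_n(\rho)$, namely pairwise interior-disjoint balls of arbitrary radii all tangent to the central unit ball. Interior-disjointness and tangency are preserved under the homothety $z \mapsto (z - c)/r(X)$, so this is in fact immediate, but one must also note that the caps cut out on $\rho S^{n-1}$ by the (possibly large) neighbors are exactly the caps whose union is being measured in the definition of $\dens_n$, so that the sum of their normalized areas is bounded by $\dens_n(\rho)$ — overlaps only help the inequality. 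A secondary subtlety is that $A_{n,\rho}(r(Y)/r(X))$ may be $0$ when $Y$ is so small that it does not reach $\rho S^{n-1}$; this costs nothing, since we only need an upper bound on that term's contribution to $\dens_n(\rho)$ and a lower bound, via Lemma~\ref{lem:minimum}, on the symmetrized contribution of each edge, and the lemma is stated for all $r > 0$ precisely to cover this case.
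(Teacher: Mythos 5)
Your proof is correct and takes essentially the same route as the paper's: a per-ball application of the definition of $\dens_n(\rho)$ after rescaling $X$ to a unit ball, summed over all balls and combined edge-wise with Lemma~\ref{lem:minimum}, using $A_{n,\rho}(1)>0$ for $1<\rho<3$. One aside is slightly backwards --- overlapping caps would hurt, not help, the per-ball inequality --- but this is immaterial, since interior-disjointness of the neighbors makes the caps interior-disjoint, so the sum of their normalized areas is indeed at most $\dens_n(\rho)$.
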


\begin{proof}
Let~$G = (\Pcal, E)$ be the contact graph of a packing~$\Pcal$ of
balls in~$\R^n$. Denote by~$r(X)$ the radius of a ball~$X \in \Pcal$. On
the one hand, applying Lemma~\ref{lem:minimum} we get
\[
  \sum_{\{X, Y\} \in E} A_{n,\rho}(r(X) / r(Y)) + A_{n,\rho}(r(Y) / r(X))
  \geq 2 A_{n,\rho}(1) |E|.
\]
On the other hand, writing~$N(X)$ for the set of neighbors
of~$X \in \Pcal$, we get
\[
  \begin{split}
  \sum_{\{X, Y\} \in E} A_{n,\rho}(r(X) / r(Y)) + A_{n,\rho}(r(Y) / r(X))
  &= \sum_{X \in \Pcal} \sum_{Y \in N(X)} A_{n,\rho}(r(Y) / r(X))\\
  &\leq \dens_n(\rho) |\Pcal|.
  \end{split}
\]

Since~$\rho < 3$, we have~$A_{n,\rho}(1) > 0$. Putting it all together
we then get
\[
  \frac{2|E|}{|\Pcal|} \leq \frac{\dens_n(\rho)}{A_{n,\rho}(1)},
\]
finishing the proof.
\end{proof}

Note that~$A_{n,\rho}(1)$ is simply the normalized area of a spherical
cap of radius~$\alpha$ such that
\[
  \cos\alpha = \frac{\rho^2+3}{4\rho},
\]
and so we can compute~$A_{n,\rho}(1)$ explicitly for all~$n \geq
3$. By using in Theorem~\ref{thm:upper-bound} the trivial inequality
$\dens_n(\rho) \leq 1$ and taking~$\rho = \sqrt{3}$, we obtain upper
bounds for~$\avgkiss_n$: for~$n = 3$ we get the upper bound
of~$14.928\ldots$ from Kuperberg and Schramm~\cite{KuperbergS1994};
for~$n = 4$ and~$5$ we get the upper bounds~$34.680\ldots$
and~$77.756\ldots$ of Glazyrin~\cite{Glazyrin2017}. The
choice~$\rho = \sqrt{3}$ is optimal when using the upper
bound~$\dens_n(\rho) \leq 1$.

Extending techniques of Florian~\cite{Florian2001, Florian2007},
Glazyrin~\cite{Glazyrin2017} gives a better upper bound
for~$\dens_3(\sqrt{3})$, and so obtains~$\avgkiss_3 \leq 13.955$.

%%%%%%%%%%%%%%%%%%%%%%%%%%%%%%%%%%%%%%%%%%%%%%%%%%%%%%%%%%%%%%%%%%%%%%

\section{Refining Glazyrin's approach using semidefinite programming}
\label{sec:first-sdp}

From Theorem~\ref{thm:upper-bound} we see that better upper bounds
for~$\dens_n(\rho)$ have the potential to give us better upper bounds
for the average kissing number. We will see now how semidefinite
programming can be used to provide upper bounds for~$\dens_n(\rho)$;
these upper bounds lead to improved upper bounds for the average
kissing number for~$n = 3$, \dots,~$9$ (see Table~\ref{tab:bounds}).

For fixed~$1 < \rho < 3$, the function~$A_{n,\rho}(r)$ is increasing
in~$r$ and has a limit at infinity, which we denote
by~$A_{n,\rho}(\infty)$. It is actually easy to compute this limit:
as~$r$ increases, the ball of radius~$r$ tangent to the central ball
of radius~$1$ resembles more and more a hyperplane tangent to the
central ball, so~$A_{n,\rho}(\infty)$ is the normalized area
of a spherical cap of radius~$\alpha$ such
that~$\cos\alpha = 1 / \rho$.

\begin{figure}[tb]
  \begin{center}
    \includegraphics{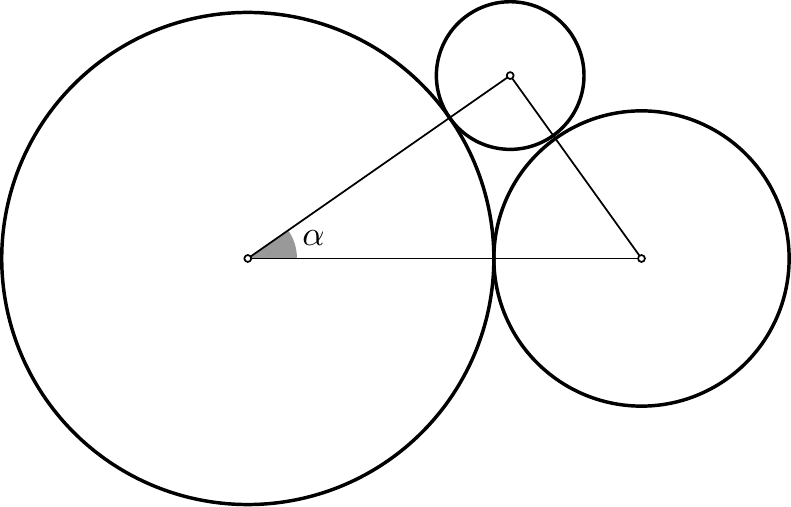}
  \end{center}

  \caption{Here, all circles are tangent. If the central circle has
    radius~$1$ and the other two have radii~$r$ and~$s$, then the law
    of cosines gives
    $(r+s)^2 = (1 + r)^2 + (1 + s)^2 - 2(1+r)(1+s)\cos\alpha$.}
  \label{fig:ip-def}
\end{figure}

Say two interior-disjoint balls of radii~$r$ and~$s$ touch a central
unit ball and let~$x$ and~$y$ be the contact points between each of
the balls and the central ball. Apply the law of cosines (see
Figure~\ref{fig:ip-def}) to get
\begin{equation}
  \label{eq:ip-def}
  x \cdot y \leq \frac{1 + r + s - rs}{1 + r + s + rs}.
\end{equation}
Denote the right-hand side above by~$\ip(r, s)$.

If~$F\colon [0,1]^2 \to \R$ is a kernel and~$U \subseteq [0, 1]$ is a
finite set, then the matrix $\bigl(F(u, v)\bigr)_{u, v \in U}$ is a
\defi{principal submatrix} of~$F$. We denote by~$P_k^n$ the Jacobi
polynomial of degree~$k$ and parameters~$\alpha = \beta = (n-3)/2$,
normalized so~$P_k^n(1) = 1$ (see the book by Szegö~\cite{Szego1975}
for background on Jacobi polynomials).
% TODO: mention Bochner: continuous kernel etc.???

The following theorem is our basic tool to find upper bounds
for~$\dens_n(\rho)$.

\begin{theorem}
  \label{thm:dens-upper-bound}
  Let~$n \geq 3$ be an integer, $\rho$ be such that~$1 < \rho < 3$,
  and~$R$ be such that~$R > (\rho - 1) / 2$. Let $r$ be an increasing
  bijection from~$[0,1]$ to~$[(\rho-1)/2, R]$ and let
  $a\colon [0, 1] \to\nobreak \R$ be such that
  $a(u) \geq A_{n,\rho}(r(u))^{1/2}$ for all~$u \in [0, 1]$ and
  $a(1) \geq\nobreak A_{n,\rho}(\infty)^{1/2}$.

  Fix an integer~$d > 0$ and for every~$k = 0$, \dots,~$d$
  let~$F_k\colon [0,1]^2 \to \R$ be a kernel; write
  \begin{equation}
    \label{eq:f-def}
    f(t, u, v) = \sum_{k=0}^d F_k(u, v) P_k^n(t)\qquad\text{for~$t \in
      [-1, 1]$ and~$u$, $v \in [0,1]$.}
  \end{equation}
  If~$f$ and the kernels~$F_k$ are such that
  \begin{enumerate}
  \item[(i)] every principal submatrix of $F_0 - a \otimes a^*$ is
    positive semidefinite,

  \item[(ii)] every principal submatrix of $F_k$ is positive
    semidefinite for~$k = 0$, \dots,~$d$, and

  \item[(iii)] $f(t, u, v) \leq 0$ whenever~$-1 \leq t \leq \ip(r(u), r(v))$,
  \end{enumerate}
  then $\dens_n(\rho) \leq \max\{\, f(1, u, u) : u \in [0, 1]\,\}$.
\end{theorem}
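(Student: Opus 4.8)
The plan is to take an arbitrary configuration of pairwise interior-disjoint balls tangent to the central unit ball, encode it as a finite point set on $S^{n-1}$ together with the radii of the balls, and then show that the total area such a configuration covers on $\rho S^{n-1}$ is bounded by $\max\{f(1,u,u):u\in[0,1]\}$. So let $B_1,\dots,B_N$ be such balls, with radii $\rho_1,\dots,\rho_N$, let $x_i\in S^{n-1}$ be the contact point of $B_i$ with the central ball, and let $u_i\in[0,1]$ be chosen so that $r(u_i)=\min\{\rho_i,R\}$ (using the bijection $r$; if $\rho_i\ge R$ we set $u_i=1$). The covered fraction of $\rho S^{n-1}$ is at most $\sum_{i=1}^N A_{n,\rho}(\rho_i)$, and since $A_{n,\rho}$ is increasing with limit $A_{n,\rho}(\infty)$, we have $A_{n,\rho}(\rho_i)\le a(u_i)^2$ for every $i$ by the hypotheses on $a$. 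Hence it suffices to bound $\sum_i a(u_i)^2$.

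Next I would form the sum $\sum_{i,j=1}^N f(x_i\cdot x_j,u_i,u_j)$ and estimate it in two ways. For the \emph{lower} bound, split the sum into diagonal and off-diagonal parts. Since the balls $B_i$ are interior-disjoint and touch the central ball, inequality~\eqref{eq:ip-def} gives $x_i\cdot x_j\le\ip(\rho_i,\rho_j)$ for $i\ne j$; because $\ip$ is monotone in each argument and $r(u_i)\le\rho_i$ (with the convention at $R$), one gets $x_i\cdot x_j\le\ip(r(u_i),r(u_j))$, so hypothesis~(iii) forces $f(x_i\cdot x_j,u_i,u_j)\le 0$ for each off-diagonal term. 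The diagonal terms are $f(1,u_i,u_i)$ since $P_k^n(1)=1$. Therefore
\[
  \sum_{i,j=1}^N f(x_i\cdot x_j,u_i,u_j)\le\sum_{i=1}^N f(1,u_i,u_i)\le N\cdot\max_{u\in[0,1]}f(1,u,u).
\]
For the \emph{upper} bound on the same sum, expand $f$ using \eqref{eq:f-def}:
\[
  \sum_{i,j=1}^N f(x_i\cdot x_j,u_i,u_j)=\sum_{k=0}^d\sum_{i,j=1}^N F_k(u_i,u_j)P_k^n(x_i\cdot x_j).
\]
Here the classical positive-definiteness of Jacobi (Gegenbauer-type) polynomials on $S^{n-1}$ enters: for each fixed $k$ the matrix $\bigl(P_k^n(x_i\cdot x_j)\bigr)_{i,j}$ is positive semidefinite. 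Since $\bigl(F_k(u_i,u_j)\bigr)_{i,j}$ is a principal submatrix of $F_k$, it is positive semidefinite by~(ii), and the Schur product theorem makes the entrywise product $\bigl(F_k(u_i,u_j)P_k^n(x_i\cdot x_j)\bigr)_{i,j}$ positive semidefinite; summing the all-ones quadratic form over $i,j$ gives a nonnegative quantity for every $k\ge1$. For $k=0$ we have $P_0^n\equiv1$, so that term equals $\sum_{i,j}F_0(u_i,u_j)$; writing $F_0=(F_0-a\otimes a^*)+a\otimes a^*$ and using~(i) together with Schur, the contribution of $F_0-a\otimes a^*$ is nonnegative, leaving $\sum_{i,j}a(u_i)a(u_j)=\bigl(\sum_i a(u_i)\bigr)^2\ge\sum_i a(u_i)^2$. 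Combining, $\sum_i a(u_i)^2\le N\cdot\max_u f(1,u,u)$, which divided by $N$ bounds the covered fraction; taking the supremum over configurations yields $\dens_n(\rho)\le\max\{f(1,u,u):u\in[0,1]\}$.

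The main obstacle is the book-keeping around the truncation at $R$ and the value $u=1$: one must check that replacing the true radius $\rho_i$ by $r(u_i)=\min\{\rho_i,R\}$ only weakens the constraint in~(iii) (it does, by monotonicity of $\ip$) and that the area estimate $A_{n,\rho}(\rho_i)\le a(u_i)^2$ survives this truncation (it does, thanks to the extra hypothesis $a(1)\ge A_{n,\rho}(\infty)^{1/2}$ covering the case $\rho_i\ge R$). A secondary point worth stating cleanly is the positive semidefiniteness of $\bigl(P_k^n(x_i\cdot x_j)\bigr)_{i,j}$ for points on $S^{n-1}$, which is the standard addition-formula fact for Jacobi polynomials with parameters $\alpha=\beta=(n-3)/2$; everything else is a routine application of the Schur product theorem.
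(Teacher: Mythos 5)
Your setup---encoding the balls as pairs $(x_i,u_i)$, the truncation at $R$, the monotonicity of $\ip$, and the positive semidefiniteness of $\bigl(F_k(u_i,u_j)P_k^n(x_i\cdot x_j)\bigr)_{i,j}$ via the addition theorem plus the Schur product theorem---is sound and matches the paper. The gap is in the choice of the all-ones test vector. Your two estimates of $\sum_{i,j}f(x_i\cdot x_j,u_i,u_j)$ give only
\[
  \Bigl(\sum_i a(u_i)\Bigr)^2 \;\leq\; \sum_{i,j} f(x_i\cdot x_j,u_i,u_j) \;\leq\; N\,\max_{u\in[0,1]} f(1,u,u),
\]
hence at best $\sum_i a(u_i)^2 \leq N\max_u f(1,u,u)$, with a factor~$N$ equal to the number of balls. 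This cannot prove the theorem: the quantity to be bounded is the \emph{total} covered fraction, which is at most $\sum_i A_{n,\rho}(\rho_i)\leq \sum_i a(u_i)^2$, not an average over the balls, and $N$ is unbounded. Your closing sentence ``which divided by $N$ bounds the covered fraction'' contradicts your own (correct) reduction in the first paragraph: dividing by $N$ only bounds $\frac1N\sum_i a(u_i)^2$, which says nothing about $\dens_n(\rho)$.

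The missing idea is to test the positive semidefinite matrix against the weight vector $\bigl(a(u_i)\bigr)_i$ rather than the all-ones vector; this is exactly where hypothesis~(i) earns its keep. From (i), (ii) and your Schur-product argument, the matrix $\bigl(f(x_i\cdot x_j,u_i,u_j)-a(u_i)a(u_j)\bigr)_{i,j}$ is positive semidefinite, so
\[
  0\;\leq\;\sum_{i,j}\bigl(f(x_i\cdot x_j,u_i,u_j)-a(u_i)a(u_j)\bigr)a(u_i)a(u_j)
  \;\leq\;\sum_i f(1,u_i,u_i)a(u_i)^2-\Bigl(\sum_i a(u_i)^2\Bigr)^2,
\]
where the second inequality uses (iii) to discard the off-diagonal terms, whose weights $a(u_i)a(u_j)$ are nonnegative. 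Hence $\bigl(\sum_i a(u_i)^2\bigr)^2\leq \max_u f(1,u,u)\cdot\sum_i a(u_i)^2$, and dividing by $\sum_i a(u_i)^2>0$ gives $\sum_i a(u_i)^2\leq\max_u f(1,u,u)$ with no factor~$N$; combined with your first paragraph this yields the theorem, and it is precisely the paper's argument. (A minor additional point: balls of radius less than $(\rho-1)/2$ miss $\rho S^{n-1}$ entirely and should be discarded before defining $u_i$, since $\min\{\rho_i,R\}$ then lies outside the range of~$r$.)
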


This theorem is very similar to Theorem~1.2 of de Laat, Oliveira, and
Vallentin~\cite{LaatOV2014}. They consider configurations of spherical
caps of different radii, but the radii are taken from a finite list of
possibilities; then the function~$f$ is matrix valued. Here we work
with configurations of balls of different radii, and the list of
possible radii is infinite, namely the interval~$[(\rho - 1) / 2,
R]$. For this reason we work with the function~$f$ as defined in the
theorem above; $f$ can be seen as a kernel-valued function that
assigns to each~$t \in [-1, 1]$ a kernel on~$[0,1]^2$.

\begin{proof}
Consider any configuration~$\Pcal$ of interior-disjoint balls of any
radii tangent to the unit ball centered at the origin. Let~$\Delta$ be
the normalized area of~$\rho S^{n-1}$ covered by this configuration
and assume~$\Delta > 0$. Since a ball of radius less than~$(\rho-1) /
2$ tangent to the central unit ball does not intersect~$\rho S^{n-1}$,
we assume that each ball in~$\Pcal$ has radius at least~$(\rho - 1) /
2$.

Given a ball~$B \in \Pcal$, consider the point~$x \in S^{n-1}$ where
it touches the central unit ball. If the radius of~$B$ is in the
interval~$[(\rho-1) / 2, R]$, then let~$u \in [0, 1]$ be such
that~$r(u)$ is the radius of~$B$; otherwise, set~$u = 1$. Now~$B$ will
be represented by the pair~$(x, u)$;
let~$I \subseteq S^{n-1} \times [0, 1]$ be the set of pairs
representing each ball in~$\Pcal$.

We will need the following claim: if every principal submatrix
of~$F\colon [0,1]^2 \to \R$ is positive semidefinite and if~$k \geq 0$
is an integer, then the matrix
\[
  \bigl( F(u, v) P_k^n(x \cdot y) \bigr)_{(x, u), (y, v) \in I}
\]
is positive semidefinite.

The proof of this claim is as follows. Write
\[
  \begin{split}
    S &= \{\, x \in S^{n-1} : \text{$(x, u) \in I$ for some~$u \in
      [0,1]$}\,\}\quad\text{and}\\
    T &= \{\, u \in [0,1] : \text{$(x, u) \in I$ for some~$x \in
      S^{n-1}$}\,\}.
  \end{split}
\]
The addition theorem for Gegenbauer
polynomials~\cite[Theorem~9.6.3]{AndrewsAR1999} implies that there is
a real finite-dimensional Hilbert space~$H$ and vectors~$p(x) \in H$
for~$x \in S$ such that $P_k^n(x \cdot y) = \langle p(x), p(y)\rangle$
for all~$x$, $y \in S$, where~$\langle\cdot, \cdot\rangle$ denotes the
inner product in~$H$. Similarly, since every principal submatrix
of~$F$ is positive semidefinite, there is a real finite-dimensional
Hilbert space, which we may assume to be~$H$ as well, and
vectors~$q(u) \in H$ for~$u \in T$ such
that~$F(u, v) = \langle q(u), q(v)\rangle$ for all~$u$, $v \in T$. But
then
\[
  F(u, v) P_k^n(x \cdot y) = \langle p(x) \otimes q(u), p(y) \otimes
  q(v)\rangle
\]
for all~$(x, u)$, $(y, v) \in I$, and the claim follows.

Since~$P_0^n$ is the constant one polynomial, the claim just proved
together with (i), (ii), and the definition of~$f$ implies that the
matrix
\[
  \bigl(f(x\cdot y, u, v) - a(u) a(v)\bigr)_{(x, u), (y, v) \in I}
\]
is positive semidefinite. Hence
\[
  \sum_{(x, u), (y, v) \in I} f(x \cdot y, u, v) a(u) a(v) - a(u)^2
  a(v)^2 \geq 0.
\]

Since~$\Pcal$ is a configuration of interior-disjoint balls,
if~$(x, u)$, $(y, v) \in I$ then
$-1 \leq x \cdot y \leq \ip(r(u), r(v))$. Now use~(iii) and split the
sum on the left-hand side above into the diagonal and off-diagonal
terms to get the inequality
\[
  \begin{split}
    0 &\leq \sum_{(x, u), (y, v) \in I} f(x \cdot y, u, v) a(u) a(v)-
    a(u)^2 a(v)^2\\
    &\leq \sum_{(x, u) \in I} f(1, u, u) a(u)^2 -
    \Biggl(\sum_{(x, u) \in I} a(u)^2\Biggr)^2,
  \end{split}
\]
so
\[
  \sum_{(x, u) \in I} a(u)^2 \leq \max\{\, f(1, u, u) : u \in [0,
  1]\,\}.
\]
Finally, by the construction of~$I$ and the properties satisfied
by~$a$ we know that~$\Delta$ is at most the left-hand side above, and
so the theorem follows.
\end{proof}

To use Theorem~\ref{thm:dens-upper-bound} we need to specify the
kernels~$F_k$. One way to do so is to fix an integer~$N > 0$
and functions~$p_0$, \dots,~$p_N\colon [0,1] \to \R$. Then, given a
matrix~$A \in \R^{(N+1) \times (N+1)}$, set
\[
  F(u, v) = \sum_{i,j=0}^N A_{ij} p_i(u) p_j(v).
\]
It is easy to check that, if~$A$ is positive semidefinite, then every
principal submatrix of~$F$ is positive semidefinite. Similarly,
if~$a = \alpha_0 p_0 + \cdots + \alpha_N p_N$ and the matrix
$\bigl(A_{ij} - \alpha_i \alpha_j\bigr)_{i,j=0}^N$ is positive
semidefinite, then every principal submatrix of~$F - a \otimes a^*$ is
positive semidefinite.

In this way, Theorem~\ref{thm:dens-upper-bound} can be rephrased as a
semidefinite program. By choosing different
functions~$p_0$, \dots,~$p_N$, one obtains different optimization
problems, and there is an interplay between the functions chosen to
specify the kernels and the quality of the approximation~$a$
of~$u \mapsto A_{n,\rho}(r(u))^{1/2}$ that one can obtain. In the next
two sections we will use this approach to construct optimization
problems that give bounds for~$\dens_n(\rho)$ that lead to the new
upper bounds for the average kissing number in Table~\ref{tab:bounds};
for the functions~$p$ we will take alternately step functions and
polynomials.

%=====================================================================

\subsection{Step functions}

Let us first set the~$p_i$ to be step functions.
Fix~$R > (\rho - 1)/2$ and let~$r\colon [0, 1] \to [(\rho-1)/2, R]$ be
such that
\begin{equation}
  \label{eq:ru-step}
  r(u) = (R - (\rho - 1) / 2) u + (\rho-1)/2.
\end{equation}
Note that~$r$ is an increasing bijection between~$[0, 1]$
and~$[(\rho - 1) / 2, R]$.

Now fix an integer~$N > 0$ and
points~$0 = s_0 < s_1 < \cdots < s_N < s_{N+1} = 1$.
Let~$S_i = [s_i, s_{i+1})$ for~$i = 0$, \dots,~$N - 1$
and~$S_N = [s_N, s_{N+1}]$. Let~$p_i$ be the function that is~$1$
on~$S_i$ and~$0$ everywhere else.

The function~$a$ is now simple to specify: for $u \in [0, 1]$ set
\[
  a(u) = \begin{cases}
    A_{n,\rho}(r(s_{i+1}))^{1/2}&\text{if~$u \in S_i$ for some~$i <
      N$};\\
    A_{n,\rho}(\infty)^{1/2}&\text{if~$u \in S_N$.}
  \end{cases}
\]
Then~$a$ is an upper approximation of the
function~$u \mapsto A_{n,\rho}(r(u))^{1/2}$ (since this is a
monotonically increasing function) as needed in
Theorem~\ref{thm:dens-upper-bound}, and moreover~$a$ is a linear
combination of the~$p_i$ functions.

Each kernel~$F_k$ is parameterized by an~$(N+1) \times (N+1)$
positive-semidefinite matrix~$A_k$ as follows:
\[
  F_k(u, v) = \sum_{i,j=0}^N A_{k,ij} p_i(u) p_j(v)\qquad\text{for
    all~$u$, $v \in [0, 1]$},
\]
where~$A_{k,ij} = (A_k)_{ij}$.  So the kernels~$F_k$ are constant on
the sets~$S_i \times S_j$, and therefore~$F_k$ can be quite naturally
identified with~$A_k$. For fixed~$u$, $v \in [0, 1]$, the
function~$t\mapsto f(t, u, v)$ defined in~\eqref{eq:f-def} is a
polynomial on~$t$; for~$i$, $j = 0$, \dots,~$N$ we write~$f_{ij}(t)$
for the common value that~$f(t, u, v)$ assumes on~$S_i \times S_j$,
that is,
\[
  f_{ij}(t) = f(t, s_i, s_j) = \sum_{k=0}^d \sum_{i,j=0}^N A_{k,ij}
  p_i(s_i) p_j(s_j) P_k^n(t) = \sum_{k=0}^d A_{k,ij} P_k^n(t).
\]

Say that~$u \in S_i$ and~$v \in S_j$; from~\eqref{eq:ip-def} we
have~$\ip(r(u), r(v)) \leq \ip(r(s_i), r(s_j))$. So to ensure that~$f$
satisfies item~(iii) of Theorem~\ref{thm:dens-upper-bound} we have to
ensure that, for all~$i$, $j = 0$, \dots,~$N$,
\begin{equation}
  \label{eq:step-f}
  f_{ij}(t) \leq 0\qquad\text{whenever~$-1 \leq t \leq \ip(r(s_i),
    r(s_j))$}.
\end{equation}

Summarizing, if~$a = \alpha_0 p_0 + \cdots + \alpha_N p_N$, then any
feasible solution of the following optimization problem gives an upper
bound for~$\dens_n(\rho)$:
\begin{equation}
  \label{opt:dens-step}
  \optprob{\min&\multicolumn{2}{l}{\max\{\,f_{ii}(1) : \text{$i = 0$, \dots,~$N$}\,\}}\\
    &\multicolumn{2}{l}{f_{ij}(t) = \sum_{k=0}^d A_{k,ij} P_k^n(t),}\\[1ex]
    &f_{ij}(t) \leq 0&\text{whenever $-1 \leq t \leq \ip(r(s_i),
      r(s_j))$},\\
    &\bigl(A_{0,ij} - \alpha_i \alpha_j\bigr)_{i,j=0}^N&\text{is
      positive semidefinite,}\\[1ex]
    &A_k \in \R^{(N+1) \times (N+1)}&\text{is positive semidefinite for~$k =
      0$, \dots,~$d$.}
  }
\end{equation}

To model the nonpositivity constraints on the functions~$f_{ij}$ we
ensure nonpositivity on a finite sample of points
in~$[-1, \ip(r(s_i), r(s_j))]$. For the complete approach and a
description of how the solutions found by a solver can be rigorously
verified, see Appendix~\ref{ap:step-verify}.

Problem~\ref{opt:dens-step} can be used to give better bounds for the
average kissing number in dimensions~$5$, \dots,~$9$; see
Table~\ref{tab:bounds}. In dimension~$3$, we could not beat Glazyrin's
bound using this problem; in dimension~$4$, the bound provided is
better than Glazyrin's bound, but worse than the bound
of~\S\ref{sec:polynomials} below. To obtain the bounds of
Table~\ref{tab:bounds}, we used~$\rho = 2$, $N = 30$,
and~$R \approx 184.25$; see the verification script
(cf.~Appendix~\ref{ap:step-verify}) for precise information.

%=====================================================================

\subsection{Polynomials}
\label{sec:polynomials}

We now take the functions~$p_i$ to be polynomials.  Fix an
integer~$N > 0$ and let~$p_i(u) = u^i$ for~$i = 0$, \dots,~$N$.
Fix~$R > (\rho - 1) / 2$ and
let~$r\colon [0, 1] \to [(\rho - 1) / 2, R]$ be such that
\begin{equation}
  \label{eq:poly-ru}
  r(u) = (R - (\rho - 1) / 2) u^2 + (\rho - 1) / 2.
\end{equation}
Note that~$r$ is an increasing bijection between~$[0, 1]$
and~$[(\rho - 1) / 2, R]$. Note also that, in comparison
with~\eqref{eq:ru-step}, we have~$u^2$ instead of~$u$ above; we could
use~$u$, but we noticed that this leads to a worse upper
approximation~$a$.

To get the function~$a$ we solve a simple linear program
that is set up as follows. We have variables~$\alpha_0$,
\dots,~$\alpha_N$ for the coefficients of~$p_0$, \dots,~$p_N$. We fix
some~$\epsilon \geq 0$ and a finite sample~$S$ of points in~$[0, 1]$
and consider the constraints
\[
  \begin{array}{ll}
  \alpha_0 p_0(u) + \cdots + \alpha_N p_N(u) \geq
    A_{n,\rho}(r(u))^{1/2} + \epsilon&\text{for~$u \in S$},\\
    \alpha_0 p_0(1) + \cdots + \alpha_N p_N(1) \geq
    A_{n,\rho}(\infty)^{1/2}.
  \end{array}
\]
The objective is to minimize
\[
  \max\{\,\alpha_0 p_0(u) + \cdots + \alpha_N p_N(u) -
  A_{n,\rho}(r(u))^{1/2} : u \in S\,\}.
\]

\begin{figure}[tb]
  \begin{center}
    \includegraphics{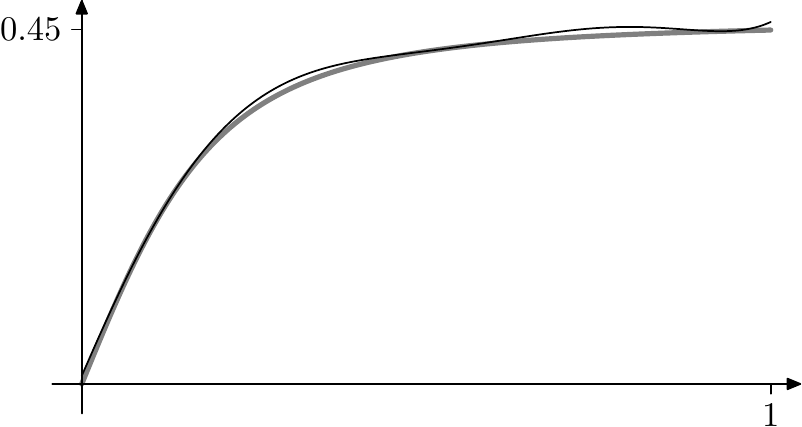}
  \end{center}

  \caption{A polynomial of degree~6 (in black) that
    approximates~$u \mapsto A_{3,\sqrt{3}}(r(u))^{1/2}$ (in gray) from
    above; here we take~$R =\nobreak 30$. Note the jump at the end to
    make~$a(1) \geq A_{3,\sqrt{3}}(\infty)$.}
  \label{fig:upper-approx}
\end{figure}

By taking a fine enough sample of points and a small
positive~$\epsilon \approx 10^{-5}$, the
function~$a = \alpha_0 p_0 + \cdots + \alpha_N p_N$ will satisfy the
properties in Theorem~\ref{thm:dens-upper-bound}, though this has to
be verified (see Appendix~\ref{ap:verify-poly}). If~$N$ is large
enough, the function~$a$ should also be close enough
to~$u \mapsto A_{n,\rho}(r(u))^{1/2}$ (see
Figure~\ref{fig:upper-approx}).

Each kernel~$F_k$ is parameterized by an~$(N+1) \times (N+1)$
positive-semidefinite matrix~$A_k$ as follows:
\[
  F_k(u, v) = \sum_{i,j=0}^N A_{k,ij} p_i(u) p_j(v)\qquad\text{for
    all~$u$, $v \in [0, 1]$}.
\]
So the function~$f$ defined in~\eqref{eq:f-def} is a polynomial
on~$t$, $u$, and~$v$. Recalling~\eqref{eq:ip-def}, item~(iii) in
Theorem~\ref{thm:dens-upper-bound} then asks that~$f(t, u, v)$ should
be nonpositive on the semialgebraic set~$\{\,(t, u, v) : \text{$s_i(t, u, v)
\geq 0$ for~$i = 1$, \dots,~$4$}\,\}$, where the~$s_i$ are the
following polynomials:
\begin{equation}
  \label{eq:sos-domain}
  \begin{split}
    s_1 &= t + 1,\\
    s_2 &= 1 + r(u) + r(v) - r(u) r(v) - t (1 + r(u) + r(v) + r(u) r(v)),\\
    s_3 &= u(1-u) + v(1-v),\quad\text{and}\\
    s_4 &= u(1-u) v(1-v);
  \end{split}
\end{equation}
note that~$s_3(t, u, v) \geq 0$ and~$s_4(t, u, v) \geq 0$ if and only
if~$0 \leq u \leq 1$ and~$0 \leq v \leq 1$.\footnote{Why note
  take~$s_3 = u(1-u)$ and~$s_4 = v(1-v)$? Then it is also true
  that~$s_3(t, u, v) \geq 0$ and~$s_4(t, u, v) \geq 0$ if and only
  if~$0 \leq u \leq 1$ and~$0 \leq v \leq 1$. The reason for our
  choice is that the polynomials~$s_3$ and~$s_4$
  in~\eqref{eq:sos-domain} are symmetric in~$u$ and~$v$, and this
  helps us reduce the size of the corresponding semidefinite program;
  see Appendix~\ref{ap:verify-poly-sol}.}

The nonpositivity condition can then be restricted to a sum-of-squares
condition: we require that there exist polynomials~$q_1$,
\dots,~$q_5$, each a sum of squares, such that
\begin{equation}
  \label{eq:sos-f}
  f = -s_1 q_1 - s_2 q_2 - s_3 q_3 - s_4 q_4 - q_5,
\end{equation}
since then~$f$ is clearly nonpositive in the required domain.

Finally, the upper bound on~$\dens_n(\rho)$ is given by the maximum
value of the function~$u \mapsto f(1, u, u)$ on~$[0, 1]$; note that
this function is a univariate polynomial on~$u$ of degree~$2N$. A
theorem of Lukács~\cite[Theorem~1.21.1]{Szego1975} says that this
maximum is equal to the minimum~$\lambda$ for which there are
univariate polynomials~$l_1$ and~$l_2$, each a sum of squares, such
that
\begin{equation}
  \label{eq:sos-obj}
  \lambda - f(1, u, u) = l_1(u) + u (1 - u) l_2(u).
\end{equation}

Putting it all together, we get the following optimization problem, any
feasible solution of which gives an upper bound for~$\dens_n(\rho)$:
\begin{equation}
  \label{opt:dens-sos}
  \optprob{\min&\lambda\\
    &f(t, u, v) = \sum_{k=0}^d \sum_{i,j=0}^N A_{k,ij} p_i(u) p_j(v)
    P_k^n(t),\\[1ex]
    &\text{$q_1$, \dots,~$q_5$ are sum-of-squares polynomials satisfying~\eqref{eq:sos-f},}\\[1ex]
    &\text{$l_1$ and $l_2$ are sum-of-squares polynomials
      satisfying~\eqref{eq:sos-obj},}\\[1ex]
    &\text{$\bigl(A_{0,ij} - \alpha_i \alpha_j\bigr)_{i,j=0}^N$ is
      positive semidefinite,}\\[1ex]
    &\text{$A_k \in \R^{(N+1) \times (N+1)}$ is positive semidefinite for~$k =
      0$, \dots,~$d$.}
  }
\end{equation}

Problem~\eqref{opt:dens-sos} can be rewritten as a
semidefinite program, where each sum-of-squares polynomial
is parameterized by a positive-semidefinite
matrix. Appendix~\ref{ap:verify-poly-sol} gives a detailed description
of the semidefinite program we solve and an overview of
how the solution found by the solver can be verified to be
feasible.

The approach of this section provides better bounds for the average
kissing number in dimensions~$3$ and~$4$; see
Table~\ref{tab:bounds}. In higher dimensions we could not manage to
obtain any bounds using this approach, since the problems are
infeasible when polynomials of low degree are used, and too large when
polynomials of high degree are used.

%%%%%%%%%%%%%%%%%%%%%%%%%%%%%%%%%%%%%%%%%%%%%%%%%%%%%%%%%%%%%%%%%%%%%%

\section{A direct linear programming bound}
\label{sec:cohn-elkies}

Suppose we want to find the average kissing number, but that we
restrict ourselves to packings having balls of a few prescribed radii,
say~$r_1 < \cdots < r_N$. How many neighbors can a vertex in the
contact graph of such a packing have? Or, in other words, how many
balls can touch a given ball in the packing? Certainly, the largest
number of balls touching a central ball is attained when the central
ball has the largest possible radius,~$r_N$, and every ball touching
it has the smallest possible radius,~$r_1$. Hence the maximum degree
of the contact graph, and by consequence its average degree, is at
most the maximum number of interior-disjoint balls of radius~$r_1$
that can simultaneously touch a central ball of radius~$r_N$.

This is a simple upper bound for this restricted average kissing
number, but one could object it is too \textit{local}: the bound does
not take the whole packing into account, ignoring the interaction
between different balls. In particular, it is usually impossible for
every vertex in the packing to have the maximum possible degree, since
not every vertex can be the largest ball surrounded by several small
balls.

The bound for the average kissing number given by
Theorem~\ref{thm:upper-bound} appears to be similarly local. It is
based on the parameter~$\dens_n(\rho)$, which is not defined in terms
of a packing of balls and therefore cannot take into account the
interaction between different balls in a packing. We discuss now an
alternative idea, based on the linear programming bound of Cohn and
Elkies~\cite{CohnE2003} for the sphere packing density, which seems to
overcome this issue.

A continuous (matrix-valued)
function~$f\colon \R^n \to \R^{N \times N}$ is of \textit{positive
  type} if for every finite set~$U \subseteq \R^n$ the block matrix
\[
  \bigl(f(x-y)\bigr)_{x,y \in U}
\]
is positive semidefinite. Matrix-valued functions of positive type are
straightforward extensions of functions of positive type; see e.g.~the
paper by de Laat, Oliveira, and Vallentin~\cite[\S3]{LaatOV2014} for
more on such functions.

\begin{theorem}
  \label{thm:cohn-elkies}
  Let~$r_1$, \dots,~$r_N$ be any positive numbers. If~$f\colon \R^n
  \to \R^{N \times N}$ is a continuous function of positive type such
  that
  \begin{enumerate}
  \item[(i)] $f(x)_{ij} \leq 0$ if~$\|x\| \geq r_i + r_j$ and
  \item[(ii)] $f(x)_{ij} \leq -1$ if~$\|x\| = r_i + r_j$,
  \end{enumerate}
  then the average degree of the contact graph of a packing of balls
  of radii~$r_1$, \dots,~$r_N$ is at most $\max\{\, f(0)_{ii} :
  \text{$i = 1$, \dots,~$N$}\,\}$.
\end{theorem}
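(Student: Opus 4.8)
The plan is to mimic the structure of the proof of Theorem~\ref{thm:dens-upper-bound}, but now working directly with the contact graph of a packing rather than with a local configuration around a single ball. Let~$G = (\Pcal, E)$ be the contact graph of a packing~$\Pcal$ whose balls have radii among~$r_1$, \dots,~$r_N$. Each ball~$B \in \Pcal$ is determined by its center~$c(B) \in \R^n$ and by an index~$i(B) \in \{1, \dots, N\}$ giving its radius. Two balls~$B$ and~$B'$ touch precisely when~$\|c(B) - c(B')\| = r_{i(B)} + r_{i(B')}$, and are interior-disjoint exactly when~$\|c(B) - c(B')\| \geq r_{i(B)} + r_{i(B')}$. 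So the pairs~$(c(B), i(B))$ for~$B \in \Pcal$ form a configuration to which the positive-type hypothesis on~$f$ can be applied, with the block index playing the role of the radius label.

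First I would assemble the positive-semidefinite quadratic form. Since~$f$ is of positive type, the block matrix~$\bigl(f(c(B) - c(B'))_{i(B) i(B')}\bigr)_{B, B' \in \Pcal}$ is positive semidefinite; hence, summing all its entries (equivalently, testing against the all-ones vector),
\[
  \sum_{B, B' \in \Pcal} f\bigl(c(B) - c(B')\bigr)_{i(B) i(B')} \geq 0.
\]
Next I would split this sum into diagonal terms~$B = B'$, edge terms~$\{B, B'\} \in E$, and the remaining non-edge off-diagonal terms. For the diagonal, each term is~$f(0)_{i(B) i(B)} \leq \max_i f(0)_{ii}$. For a touching pair~$\{B, B'\} \in E$ we have~$\|c(B) - c(B')\| = r_{i(B)} + r_{i(B')}$, so hypothesis~(ii) gives~$f(c(B) - c(B'))_{i(B) i(B')} \leq -1$; each unordered edge contributes two ordered terms, so the edge contribution is at most~$-2|E|$. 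For a non-touching distinct pair we have~$\|c(B) - c(B')\| > r_{i(B)} + r_{i(B')}$ — strictly, since the balls are distinct and interior-disjoint but not tangent — so hypothesis~(i) gives a nonpositive contribution, and we may discard those terms. Combining,
\[
  0 \leq |\Pcal| \max\{\, f(0)_{ii} : i = 1, \dots, N\,\} - 2|E|,
\]
whence the average degree~$2|E|/|\Pcal|$ is at most~$\max_i f(0)_{ii}$, as claimed.

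The one point that needs a little care — and the only place I anticipate any friction — is justifying that the off-diagonal non-edge terms are genuinely covered by hypothesis~(i), i.e.\ that distinct interior-disjoint balls that are not tangent satisfy~$\|c(B)-c(B')\| > r_{i(B)} + r_{i(B')}$ strictly (with equality falling under~(ii) and handled above), together with the harmless observation that two distinct balls in a packing can have equal centers only with different radii, but then they are not interior-disjoint, so this does not occur. Everything else is the standard "sum all entries of a PSD matrix and bound term by term" argument; no convergence issues arise because~$\Pcal$ is finite.
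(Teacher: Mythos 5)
Your proposal is correct and follows essentially the same route as the paper's proof: index the blocks of the positive-semidefinite matrix by the balls (center plus radius label), sum all entries, and split into diagonal terms, tangent pairs bounded by $-2|E|$ via~(ii), and non-tangent pairs discarded via~(i). The extra care you take about strict inequality for non-tangent pairs is not even needed, since hypothesis~(i) already covers $\|x\| \geq r_i + r_j$.
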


\begin{proof}
Let~$\Pcal$ be a packing of balls of radii~$r_1$, \dots,~$r_N$ and
let~$I \subseteq \{1, \ldots, N\} \times \R^n$ be such
that~$(i, x) \in I$ if and only if~$\Pcal$ has a ball of radius~$r_i$
centered at~$x$. Since~$f$ is of positive type we know that
\[
  \sum_{(i, x), (j, y) \in I} f(x-y)_{ij} \geq 0.
\]

Let~$G = (\Pcal, E)$ be the contact graph of the packing~$\Pcal$.
Split the sum above into three parts: the diagonal terms, the terms
corresponding to pairs of balls that do not touch, and the terms
corresponding to pairs of balls that do touch. Since~$f$ satisfies~(i)
and~(ii) we get
\[
  \begin{split}
    0 &\leq \sum_{(i, x) \in I} f(0)_{ii} + \sum_{\substack{(i, x),
        (j, y) \in I\\r_i+r_j \neq \|x-y\|}}  f(x-y)_{ij}
    + \sum_{\substack{(i, x),
        (j, y) \in I\\r_i+r_j = \|x-y\|}}  f(x-y)_{ij}\\
    &\leq \sum_{(i, x) \in I} f(0)_{ii} - 2|E|\\
    &\leq |I| \max\{\,f(0)_{ii} : \text{$i = 1$, \dots,~$N$}\,\} - 2|E|,
  \end{split}
\]
whence~$2|E| / |\Pcal| \leq \max\{\,f(0)_{ii} : \text{$i = 1$,
  \dots,~$N$}\,\}$, and the theorem follows.
\end{proof}

This theorem gives a direct bound for the average kissing number
instead of the rather indirect bound of Theorem~\ref{thm:upper-bound}
via the parameter~$\dens_n(\rho)$. Moreover, we have a
\textit{two-point bound}, that takes into account interactions between
pairs of balls in a packing.

Though Theorem~\ref{thm:cohn-elkies} is stated for a finite number of
possible radii, it can be easily extended to account for radii in any
bounded interval~$[a, b]$ if one uses kernel-valued functions. It is
not immediately clear, however, how to adapt the theorem for packings
of balls of arbitrary radii, and so Theorem~\ref{thm:cohn-elkies}
cannot be directly used to compute upper bounds for the average
kissing number.

For any given radii~$r_1$, \dots,~$r_N$, it is possible to reduce the
problem of finding functions satisfying the conditions in
Theorem~\ref{thm:cohn-elkies} to a semidefinite program;
such a reduction was employed by de Laat, Oliveira, and
Vallentin~\cite[\S5]{LaatOV2014} for a similar problem. In this way,
concrete bounds can be computed.

The bound of Theorem~\ref{thm:upper-bound} can be adapted to packings
of balls of finitely many possible radii~$r_1$, \dots,~$r_N$, namely
by changing the definition of~$\dens_n(\rho)$. Fix~$k = 1$,~\dots,~$N$
and~$\rho > 1$; consider the unit ball centered at the origin. Any
configuration of pairwise interior-disjoint balls of
radii~$r_i / r_k$, for~$i = 1$, \dots,~$N$, covers a certain fraction
of the sphere~$\rho S^{n-1}$ of radius~$\rho$ centered at the
origin. Denote the supremum of this covered fraction, taken over all
such configurations, by~$\dens_n^k(\rho)$, and let
\[
  \dens_n(\rho) = \max\{\dens_n^1(\rho), \ldots, \dens_n^N(\rho)\}.
\]
Following the proof of Theorem~\ref{thm:upper-bound}, we see that the
average degree of the contact graph of any packing of balls of
radii~$r_1$, \dots,~$r_N$ is at most
\[
  \frac{\dens_n(\rho)}{A_{n,\rho}(1)}.
\]
Upper bounds for~$\dens_n^k(\rho)$ can be computed using the approach
of de Laat, Oliveira, and Vallentin~\cite[Theorem~1.2]{LaatOV2014}.

So it is possible to compare numerically, for different choices of
radii, bounds given by Theorem~\ref{thm:cohn-elkies} with bounds given
by Theorem~\ref{thm:upper-bound}. The case~$N = 1$ is particularly
simple, since then~$\dens_n(\rho)$ is the kissing number~$\tau_n$
of~$\R^n$ times the area covered by the spherical cap, which
is~$A_{n,\rho}(1)$. So
\[
  \frac{\dens_n(\rho)}{A_{n,\rho}(1)} = \tau_n,
\]
and moreover any upper bound for the kissing number, like the
linear programming bound of Delsarte, Goethals, and
Seidel~\cite{DelsarteGS1977}, gives an upper bound for the average
degree of the contact graph. As for Theorem~\ref{thm:cohn-elkies}, we
have observed numerically that it provides a worse upper bound than
the linear programming bound for the kissing number. Surprisingly,
when more than one radius is considered, the bound of
Theorem~\ref{thm:cohn-elkies} becomes even worse;
Table~\ref{tab:cohn-elkies} contains some results.

\begin{table}[tbp]
  \begin{center}
  \begin{tabular}{ccc}
\textsl{Radii}&\textsl{Adapted
                Theorem~\ref{thm:upper-bound}}&\textsl{Theorem~\ref{thm:cohn-elkies}}\\[3pt]
    \hline\noalign{\vskip3pt}
    $1$              &13.159 & 13.402 \\
    $1/2$, $1$       &13.219 & 14.877 \\
    $1/3$, $1$       &13.159 & 17.294 \\
    $1/4$, $1$       &13.159 & 19.981 \\
    $1/5$, $1$       &13.159 & 22.770 \\
    $1/6$, $1$       &13.159 & 25.651 \\
    $1/3$, $1/2$, $1$&13.311 & 17.294 \\
    $1/4$, $1/2$, $1$&13.283 & 19.981 \\
    $1/4$, $1/3$, $1$&13.310 & 19.981 \\
    $1/5$, $1/3$, $1$&13.281 & 22.770 \\
    $1/4$, $1/3$, $1/2$, $1$&13.320& 19.981\\[3pt]
    \hline
  \end{tabular}
\end{center}
\bigskip

\caption{Comparison between bounds for the average degree of contact
  graphs of packings of balls of given radii in dimension~3 given by
  the adaptation of Theorem~\ref{thm:upper-bound} and
  Theorem~\ref{thm:cohn-elkies}. These bounds have been computed
  numerically.}
\label{tab:cohn-elkies}
\end{table}

%%%%%%%%%%%%%%%%%%%%%%%%%%%%%%%%%%%%%%%%%%%%%%%%%%%%%%%%%%%%%%%%%%%%%%

\section*{Acknowledgements}

We would like to thank the Complex Systems and Big Data Competence
Centre at the University of Neuchâtel for providing access to their
computational cluster ``Cervino''.

%%%%%%%%%%%%%%%%%%%%%%%%%%%%%%%%%%%%%%%%%%%%%%%%%%%%%%%%%%%%%%%%%%%%%%

%%%%%%%%%%%%%%%%%%%%%%%%%%%%%%%%%%%%%%%%%%%%%%%%%%%%%%%%%%%%%%%%%%%%%%
%
% Appendix: formulas for area of cap, verification, details.
%
%%%%%%%%%%%%%%%%%%%%%%%%%%%%%%%%%%%%%%%%%%%%%%%%%%%%%%%%%%%%%%%%%%%%%%

\appendix

\section{Computing the area of a spherical cap}
\label{ap:area-caps}

To use Theorem~\ref{thm:dens-upper-bound} we need to be able to
compute the normalized area of a spherical cap of radius~$\alpha$
in~$S^{n-1}$, which is given by
\[
  \frac{\omega(S^{n-2})}{\omega(S^{n-1})} \int_{\cos\alpha}^1 (1 -
  u^2)^{(n-3)/2}\, du.
\]
The factor before the integral can be computed, to any desired
precision, by means of a recurrence. We will now derive a recurrence
relation for the integral above, making it possible to compute the
normalized area to any desired precision.

Let~$s$ be a real number. The Taylor series of~$u \mapsto (1 - u)^s$
around~$0$ is
\[
  \sum_{k=0}^\infty (-1)^k s (s-1) \cdots (s - k + 1) \frac{u^k}{k!}
  = \sum_{k=0}^\infty (-s)_k \frac{u^k}{k!},
\]
where for a real number~$a$ and an integer~$k \geq 0$ we denote
by~$(a)_k$ the \defi{shifted factorial}:
\[
  (a)_k = \begin{cases}
    1,&\text{if~$k = 0$;}\\
    a(a+1)\cdots(a+k-1),&\text{otherwise.}
  \end{cases}
\]

Substitute~$u$ by~$u^2$ and integrate term-by-term to get
\[
  \int (1 - u^2)^s\, du = \sum_{k=0}^\infty (-s)_k
  \frac{u^{2k+1}}{(2k+1) k!} = u\, {}_2F_1(1/2, -s; 3/2; u^2),
\]
where~${}_2F_1$ is the hypergeometric
series~\cite[Chapter~15]{AbramowitzS1964}.

So we want to compute
\[
  F_n(u) = {}_2F_1(1/2, -(n - 3) / 2; 3 / 2; u^2).
\]
Equation~(15.2.11) in the book by Abramovitz and
Stegun~\cite{AbramowitzS1964} gives us the relation
\begin{multline*}
  (c - b)\, {}_2F_1(a, b-1; c; z) + (2b - c - bz + az)\, {}_2F_1(a, b;
  c; z)\\ + b(z-1)\, {}_2F_1(a, b+1; c; z) = 0.
\end{multline*}
Take~$a = 1/2$, $b = -(n - 3) / 2$, and~$c = 3/2$ to get an
expression for~$F_{n+2}(u)$ in terms of~$F_n(u)$ and~$F_{n-2}(u)$. It
is now easy to obtain a recurrence; the base cases are
\begin{align*}
    F_3(u) &= 1,&
    F_4(u) &= \frac{u (1 - u^2)^{1/2} + \arcsin u}{2u},\\
    F_5(u) &= 1 - u^2 / 3,\qquad\text{and}&
    F_6(u) &= \frac{u (5 - 2u^2) (1 - u^2)^{1/2} + 3\arcsin u}{8u}.
\end{align*}

%%%%%%%%%%%%%%%%%%%%%%%%%%%%%%%%%%%%%%%%%%%%%%%%%%%%%%%%%%%%%%%%%%%%%%

\section{The semidefinite program for step functions and rigorous
  verification}
\label{ap:step-verify}

For each~$i$, $j = 0$, \dots,~$N$, to implement the nonpositivity
constraint for~$f_{ij}$ in problem~\eqref{opt:dens-step} we select a
finite sample~$\Scal_{ij}$ of points in~$[-1, \ip(r(s_i),
r(s_j))]$. Consider the matrix~$W$ such
that~$W_{ij} = \alpha_i \alpha_j$ and fix~$\epsilon > 0$. To obtain an
upper bound we solve the following semidefinite program,
in which the role of the~$A_0$ variable changes in comparison
with~\eqref{opt:dens-step}:
\begin{equation}
  \label{opt:dens-step-sdp}
  \optprob{\min&\max\{\,f_{ii}(1) : \text{$i = 0$, \dots,~$N$}\,\}\\
    &f_{ij}(t) = W_{ij} + \sum_{k=0}^d A_{k,ij} P_k^n(t),\\
    &\text{$f_{ij}(t) \leq -\epsilon$\quad for~$t \in \Scal_{ij}$},\\
    &\text{$A_k \in \R^{(N+1) \times (N+1)}$ is positive semidefinite for~$k =
      0$, \dots,~$d$.}
  }
\end{equation}

In practice, we select samples of~50 points for each~$i$, $j$ and
set~$\epsilon = 10^{-5}$. We solve the resulting problem with standard
solvers and obtain a tentative optimal value~$z^*$.  The next step is
to remove the objective function and add it as a constraint, requiring
that
\[
  \max\{\,f_{ii}(1) : \text{$i = 0$, \dots,~$N$}\,\} \leq z^* +
  \eta,
\]
where~$\eta \approx 10^{-3}$. When we solve this feasibility problem,
the solver returns a strictly feasible solution, that is, a solution
in which every matrix~$A_k$ is positive definite. We observed that
this solution immediately satisfies the original nonpositivity
constraints of~\eqref{opt:dens-step}.

To verify that we have indeed a feasible solution, we only have to
verify that each~$A_k$ is positive semidefinite and compute an upper
bound on the value of~$f_{ij}$ on~$[-1, \ip(r(s_i), r(s_j))]$. Since
each~$A_k$ is actually positive definite, we use high-precision
floating-point arithmetic to compute for each~$A_k$ its Cholesky
decomposition~$L_k$. Then we replace~$A_k$ by~$L_k L_k^\tp$, so~$A_k$
becomes positive semidefinite by construction.

To get an upper bound for the value of~$f_{ij}$ on the corresponding
interval, we use interval arithmetic. We split the original interval
into subintervals and evaluate~$f_{ij}$ on each subinterval, obtaining
for each subinterval an upper bound on the value of~$f_{ij}$. In this
way, we obtain an upper bound~$u_{ij}$ on the value of~$f_{ij}$ on the
original interval.

Since the definition of~$f_{ij}$ uses~$W$, which in practice is
computed numerically, it is not enough to have~$u_{ij} \leq
0$. Indeed, if we use the exact value of~$W_{ij}$ instead of an
approximation, then~$f_{ij}$ could change to a positive number. To
prevent this from happening, we need to ensure that~$u_{ij}$ is
negative enough compared to the absolute error in the computation
of~$W_{ij}$. If we use the formulas of Appendix~\ref{ap:area-caps} to
compute~$W_{ij}$ using high-precision interval arithmetic, then we
have a rigorous bound on the absolute error of each~$W_{ij}$. This
whole verification approach is implemented in a Sage~\cite{SAGE}
script included with the arXiv version of this paper.

%%%%%%%%%%%%%%%%%%%%%%%%%%%%%%%%%%%%%%%%%%%%%%%%%%%%%%%%%%%%%%%%%%%%%%

\section{The semidefinite program for polynomial interpolation and 
  rigorous verification}

Two steps are required in order to obtain rigorous upper bounds for
the average kissing number via the approach
of~\S\ref{sec:polynomials}. First, we must find a polynomial that
approximates the spherical-cap-area function from above, and we must
prove that this polynomial is really an upper bound for this
function. Second, we must rewrite problem~\eqref{opt:dens-sos} as a
semidefinite program, find good solutions for it, and prove that they
are feasible. In this section, we will see how both steps can be
carried out.

%=====================================================================

\subsection{Verifying the approximation for~$A_{n,\rho}$}
\label{ap:verify-poly}

In~\S\ref{sec:polynomials} we have seen how a polynomial~$a$
satisfying the conditions described in
Theorem~\ref{thm:dens-upper-bound} can be found. Here we quickly
describe how it can be rigorously verified that a given polynomial~$a$
satisfies these conditions; this verification approach is implemented
in a Sage~\cite{SAGE} script included with the arXiv version of this
paper.

Say~$\rho$ and~$R$ are fixed and let~$r(u)$ be defined as
in~\eqref{eq:poly-ru}. We want to prove
that~$a(u)^2 - A_{n,\rho}(r(u)) \geq 0$ for all~$u \in [0, 1]$ and
moreover that~$a(1)^2 - A_{n,\rho}(\infty) \geq 0$; the difficulty
lies in testing the validity of the first set of conditions.

Say we have an upper bound~$M$ on the absolute value of the derivative
of the function~$f(u) = a(u)^2 - A_{n,\rho}(r(u))$ on the
interval~$[0, 1]$. For an integer~$N \geq 1$, write~$\epsilon = 1 / (N
+ 1)$ and consider the points~$u_k = k\epsilon$ for~$k = 0$,
\dots,~$N+1$. If~$\eta$ is the minimum of~$f$ on the points~$u_k$,
then the mean-value theorem implies that
\[
  f(u) \geq \eta - \epsilon M / 2\qquad\text{for all~$u \in [0, 1]$.}
\]
So, as long as~$\eta - \epsilon M / 2 \geq 0$, the function~$f$ is
nonnegative on~$[0, 1]$. Computing the function~$f$ on lots of points
yields a guess for~$\eta$; then we find~$N$ such
that~$\epsilon = 1 / (N+1) \leq 2\eta/M$ and try to test the function
on the points~$u_k$. This is the approach implemented in the
verification script.

It remains to see how to compute the upper bound~$M$ on the absolute
value of the derivative of~$f$. Since~$a(u)$, and hence~$a(u)^2$, is a
polynomial, it is easy to compute an upper bound on the absolute value
of the derivative of~$a(u)^2$ rigorously using interval
arithmetic.

Bounding the derivative of~$A_{n,\rho}(r(u))$ is also simple. Indeed,
recall that~$A_{n,\rho}(s)$ equals
\[
  \frac{\omega(S^{n-2})}{\omega(S^{n-1})} \int_x^1 (1 -
  z^2)^{(n-3)/2}\, dz,
\]
where
\[
  x = \frac{\rho^2 + 2s + 1}{2\rho(1+s)}.
\]
So
\[
  A_{n,\rho}'(s) = -\frac{\omega(S^{n-2})}{\omega(S^{n-1})}
  (1-x^2)^{(n-3)/2} \frac{\rho^2-1}{2\rho(1+s)^2}.
\]
Note that~$0 \leq x \leq 1$, so $|(1-x^2)^{(n-3)/2}| \leq 1$. The
rightmost fraction above is largest when~$s = 0$ and
$|r'(u)| = |2 (R - (\rho-1)/2) u| \leq 2(R - (\rho-1)/2)$
for~$u \in [0,1]$. Finally, apply the chain rule to get
\[
  \biggl|\frac{d A_{n,\rho}(r(u))}{du}\biggr| \leq
  \frac{\omega(S^{n-2})}{\omega(S^{n-1})} \frac{|\rho^2-1|}{\rho} (R -
  (\rho-1)/2)
\]
for all~$u \in [0, 1]$.

%=====================================================================

\subsection{The semidefinite program and how to verify feasibility}
\label{ap:verify-poly-sol}

We quickly discuss how problem~\eqref{opt:dens-sos} is transformed
into a semidefinite program and then how a provably
feasible solution can be found for it.

To transform~\eqref{opt:dens-sos} into a semidefinite program, one has
to encode each sum-of-squares polynomial in terms of
positive-semidefinite matrices; here is the well-known
recipe. Let~$P_1$, \dots,~$P_m$ be a basis of the
space~$\R[x_1, \ldots, x_n]_{\leq k}$ of $n$-variable real polynomials
of degree at most~$k$ and for~$x = (x_1, \ldots, x_n)$ let~$v_k(x)$ be
the vector such that
\[
  (v_k(x))_i = P_i(x)\qquad\text{for~$i = 1$, \dots,~$m$.}
\]
We can see~$v_k$ as a ``vector'' whose entries are the
polynomials~$P_i$.

A polynomial~$p$ of degree~$2k$ is a sum of squares if and only
if there is a positive-semidefinite matrix~$X$ such that
\[
  p(x) = \langle v_k(x) v_k(x)^\tp, X\rangle,
\]
where~$\langle A, B\rangle = \trace AB$ is the trace inner product
between symmetric matrices~$A$ and~$B$. Note that above we have a
polynomial identity: both the left and right-hand sides are
polynomials that we require to be equal. This polynomial identity can
be rewritten as a set of linear constraints on the entries of the
matrix~$X$: for each monomial of degree at most~$2k$ we have one
constraint relating the coefficient of the monomial on both left and
right-hand sides.

The polynomials in~\eqref{eq:sos-obj} are univariate and have
degree~$2N$. So to model this constraint we
use~$v_N(u) = (1, u, u^2, \ldots, u^N)$.

Since the polynomial~$f(t, u, v)$ is symmetric on~$u$ and~$v$, that
is,~$f(t, u, v) = f(t, v, u)$, and since the same holds for the
polynomials~$s_1$, \dots,~$s_4$ in~\eqref{eq:sos-domain}, we are able
to further reduce the sizes of the positive-semidefinite matrices
needed to encode~\eqref{eq:sos-f}.

Indeed, say that a sum-of-squares polynomial~$p \in \R[t, u, v]$ is
symmetric on~$u$ and~$v$. It is not necessarily true that there is a
sum-of-squares decomposition~$p = q_1^2 + \cdots + q_m^2$ of~$p$ in
which every~$q_i$ is symmetric. However, we have that
\[
  \R[t, u, v] = \R[t, u+v, uv] \oplus (u - v) \R[t,
  u+v, uv],
\]
where~$\R[t, u+v, uv]$ is the ring of polynomials symmetric on~$u$
and~$v$. So say~$q_i = a_i + (u-v) b_i$, where~$a_i$ and~$b_i$ are
symmetric. Then
\[
  \begin{split}
    p(t, u, v) &= (1/2)(p(t, u, v) + p(t, v, u))\\
    &=\frac{1}{2} \sum_{i=1}^m a_i(t, u, v)^2 + 2(u-v)a_i(t, u,
    v)b_i(t, u, v) + (u - v)^2 b_i(t, u, v)^2\\
    &\phantom{=\frac{1}{2} \sum_{i=1}^m}\quad + a_i(t, v, u)^2 + 2(v-u)a_i(t, v,
    u)b_i(t, v, u) + (v - u)^2 b_i(t, v, u)^2\\
    &=\sum_{i=1}^m a_i(t, u, v)^2 + (u-v)^2 b_i(t, u, v)^2.
  \end{split}
\]
For any given~$k$, let~$v_k(t, u, v)$ be obtained from a basis
of~$\R[t, u+v, uv]_{\leq k}$ as before. The above discussion implies
that~$p \in \R[t, u, v]$ of degree~$2k$ is a symmetric sum of squares
if and only if there are positive-semidefinite matrices~$X$ and~$X'$
such that
\[
  p = \langle v_k v_k^\tp, X\rangle + \langle (u-v)^2 v_{k-1}
  v_{k-1}^\tp, X'\rangle.
\]

Now it should be clear how~\eqref{opt:dens-sos} can be rewritten as a
semidefinite program. Only one technical detail remains,
namely how to determine the degrees of the sum-of-squares polynomials
in~\eqref{opt:dens-sos}. Here we use for each polynomial the smallest
possible degree such that no term in the right-hand side
of~\eqref{eq:sos-f} has degree larger than~$d + 2N$, which is the
degree of~$f$.

The bounds for~$n = 3$ and~$4$ in Table~\ref{tab:bounds} were obtained
by solving the semidefinite program described above with~$d = 10$
and~$N = 6$, $8$, respectively. To solve the problem we use the
high-precision solver SDPA-GMP~\cite{Nakata2010}, but even so the
solution found by the solver is not truly feasible. To extract a
rational feasible solution from it we use the Julia library developed
by Dostert, de Laat, and Moustrou~\cite{DostertLM2020}. This allows us
to provide a rigorous upper bound for the average kissing number. The
script to generate the semidefinite program and to obtain an exact
rational solution is included with the arXiv version of this paper.

\end{document}